\newtheorem{thm}{Theorem}[section]
\theoremstyle{plain}
\newtheorem{lemma}[thm]{Lemma}
\newtheorem{proposition}[thm]{Proposition}
\theoremstyle{remark}
\newtheorem{remark}[thm]{Remark}
\newcommand{\OS}[2]{OP(Lap = #1, Bdry = #2)}
\newcommand{\R}{{\rm I \! R}}
\newcommand{\chisub}[1]{\chi_{_{#1}}}
\newcommand{\Cartman}{\Omega}
\numberwithin{equation}{section}
\begin{document}
\title[Perturbed Obstacle Problems]
{Perturbed Obstacle Problems in Lipschitz Domains: Linear Stability and Non--Degeneracy in Measure}

\author[I. Blank]{Ivan Blank}
\address{Department of Mathematics \\ Kansas State University \\ Manhattan, KS USA}
\email{blanki@math.ksu.edu}%
\urladdr{http://www.math.ksu.edu/~blanki}

\author[J. LeCrone]{Jeremy LeCrone}
\address{Department of Mathematics \& Computer Science \\ University of Richmond\\ Richmond, VA USA}
\email{jlecrone@richmond.edu}%
\urladdr{http://math.richmond.edu/faculty/jlecrone/}


\subjclass[2010]{Primary 35R35, 35J15, 35B20; Secondary 35J08, 35J25, 31B10} 


\keywords{obstacle problem, perturbed data, contact sets, linear bounds, 
	Green's function, Poisson kernel, Lipschitz domain}%

\begin{abstract}
We consider the classical obstacle problem 
on bounded, connected Lipschitz domains $D \subset \R^n$. 
We derive quantitative bounds on the changes to contact sets under general 
perturbations to both the right hand side and the boundary data for obstacle problems. 
In particular, we show that the Lebesgue measure of the symmetric difference 
between two contact sets is linearly comparable to the $L^1$--norm
of perturbations in the data.
\end{abstract}

\maketitle

%
%
%
%
%


%
%

\section{Introduction}

Given functions $g_1, g_2 : D \to [\lambda, \mu]$ and 
$\psi_1, \psi_2 : \partial D \to [0,\infty),$ with sufficient regularity and 
$0 < \lambda \le \mu$, we denote by $\OS{g_i}{\psi_i}$ the non--negative 
functions $u_i \in W^{1,2}(D)$ satisfying the semilinear pdes
\begin{equation}\label{ObsProb}
	\begin{cases}
		\Delta u_i = \chisub{ \{u_i > 0 \} } g_i & \text{in $D$,}\\
		u_i = \psi_i & \text{on $\partial D$},
	\end{cases}
	\qquad i = 1,2.
\end{equation}
We mention that the obstacle problem can also be formulated in terms of
variational inequalities and functional optimization, though the equivalence 
of these settings is well--known (c.f. \cite{F,R}, for instance).
The existence and uniqueness of solutions to \eqref{ObsProb} is also
shown in \cite{F,R}, via standard methods in functional analysis.

Under minimal assumptions on the data $(g_i, \psi_i)$ and the content of contact sets
$\Lambda(u_i) := \{ x \in B : u_i(x) = 0 \}$, we prove that the Lebesgue measure 
of the symmetric difference $\Lambda(u_1) \ \Delta \ \Lambda(u_2)$
is linearly comparable to the $L^1$--norms of the perturbations to data 
over appropriate sets. This result is stated in the following theorem:


\begin{thm}\label{MainResult}
	Let $D \subset \R^n$ be a bounded, connected Lipschitz domain and let
	\[
		(g_i, \psi_i) \in L^{\infty}(D) \times C(\partial D), \qquad i = 1,2,
	\]
	with $0 < \lambda \le g_i \le \mu$ and $\psi_i \ge 0$. 
	Consider the following obstacle problem solutions
	\begin{equation}\label{Solutions}
	\begin{aligned}
		u_i &= \OS{g_i}{\psi_i}, \qquad \text{and}\\
		\bar v &= \OS{\min (g_1, g_2)}{\max (\psi_1, \psi_2)}.
	\end{aligned}
	\end{equation}
	Assume there exist	$\bar{y} \in D$ and $\delta > 0$ so that 
	$B_{\delta}(\bar y) \subset \Lambda(\bar v) := \{ \bar v = 0 \}.$
	Further, for $\eta > 0$, define the set 
	\[
		D_{-\eta} := D \setminus \mathcal{N}_{\eta}(\partial D) = 
			D \setminus \{ x \in \R^n : {\rm dist}(x, \partial D) < \eta \},
	\] 
	then:
	\begin{enumerate}
		\item[(a)] \textit{(Linear Stability)} There exist positive constants
			$C_1$ and $C_2$ so that
			\begin{equation}\label{MeasStability}
				|(\Lambda(u_1) \; \Delta \; \Lambda(u_2)) \cap D_{-\eta}|
					\le C_1 \|\psi_1 - \psi_2 \|_{L^1(\partial D)} + 
					C_2 \| g_1 - g_2 \|_{L^1(\Omega(\bar v))} .
			\end{equation}
		\item[(b)] \textit{(Linear Non--Degeneracy)} 
			If $\psi_1 \ge \psi_2$ on $\partial D$
			and $g_1 \le g_2$ in $D$, then there exist positive constants
			$C_3$ and  $C_4$ so that
			\begin{equation}\label{MeasNonDegen}
				|\Lambda(u_1) \; \Delta \; \Lambda(u_2)| \ge 
					C_3 \| \psi_1 - \psi_2 \|_{L^1(\partial D)} + 
					C_4 \| g_1 - g_2 \|_{L^1(\Omega(u_1) \cap D_{-\eta})} ,
			\end{equation}
			where $\Omega(u_i) := \{ u_i > 0 \}$ denotes the non--contact
			set for $u_i.$
	\end{enumerate}
\end{thm}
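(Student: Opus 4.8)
The plan is to reduce both parts of the theorem to a single Green's--function identity evaluated at the anchor point $\bar y$, after first trapping $u_1,u_2$ between two extremal obstacle--problem solutions.

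\emph{Step 1: sandwiching via comparison.} Alongside $\bar v$ I would introduce $\underline w := \OS{\max(g_1,g_2)}{\min(\psi_1,\psi_2)}$. The comparison principle for \eqref{ObsProb} (a smaller right--hand side together with larger boundary data produces a larger solution) gives $\underline w \le u_i \le \bar v$ for $i=1,2$, hence $\Lambda(\bar v) \subseteq \Lambda(u_1)\cap\Lambda(u_2)$, $\ \Lambda(u_1)\cup\Lambda(u_2)\subseteq\Lambda(\underline w)$, and $\Omega(\underline w)\subseteq\Omega(u_i)\subseteq\Omega(\bar v)$. In particular
\[
	\Lambda(u_1)\;\Delta\;\Lambda(u_2)\ \subseteq\ \Lambda(\underline w)\setminus\Lambda(\bar v)\ =\ \Omega(\bar v)\setminus\Omega(\underline w)\ =:\ E,
\]
and since $B_\delta(\bar y)\subseteq\Lambda(\bar v)\subseteq\Lambda(\underline w)$, all of $\bar v,\underline w,u_1,u_2$ vanish on $B_\delta(\bar y)$ while $E$ and $\Omega(\underline w)$ are disjoint from $B_\delta(\bar y)$. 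Under the sign hypotheses of part (b) the two extremal solutions collapse, $\bar v=u_1$ and $\underline w=u_2$, so there $E=\Lambda(u_2)\setminus\Lambda(u_1)=\Lambda(u_1)\,\Delta\,\Lambda(u_2)$ exactly.

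\emph{Step 2: the Green's--function identity.} Let $G(\cdot,\cdot)>0$ be the Green's function of $D$, $P(\bar y,\cdot)=-\partial_{\nu}G(\bar y,\cdot)\ge 0$ the Poisson kernel, and $H[\cdot]$ the harmonic extension. Since $g_i\in L^\infty(D)$ the solutions lie in $W^{2,p}(D)$ for all $p<\infty$, so $u(x)=H[\psi](x)-\int_{\Omega(u)}G(x,y)g(y)\,dy$ holds. Applying this to $W:=\bar v-\underline w$ (for (a)) or $W:=u_1-u_2$ (for (b)) and using $\Omega(\underline w)\subseteq\Omega(\bar v)$, a direct computation gives, a.e.\ in $D$, $\Delta W = -|g_1-g_2|\,\chisub{\Omega(\underline w)} + \underline g\,\chisub{E}$, where $\underline g=\min(g_1,g_2)$ in case (a) and $\underline g=g_1$ in case (b), together with $W=|\psi_1-\psi_2|$ on $\partial D$. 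Since $W(\bar y)=0$ by Step 1, evaluating the representation formula for $W$ at $x=\bar y$ yields
\[
	\int_{E} G(\bar y,y)\,\underline g(y)\,dy \;=\; H\big[\,|\psi_1-\psi_2|\,\big](\bar y) \;+\; \int_{\Omega(\underline w)} G(\bar y,y)\,|g_1-g_2|(y)\,dy ,
\]
and both terms on the right are nonnegative.

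\emph{Step 3: two--sided Green bounds and conclusion.} On the complement of $B_\delta(\bar y)$ the function $G(\bar y,\cdot)$ is a bounded harmonic function vanishing on $\partial D$, so $G(\bar y,\cdot)\le M=M(n,D,\bar y,\delta)$ there; on the other hand $D_{-\eta}$ is a compact subset of $D$ and $G(\bar y,\cdot)$ is continuous and strictly positive on $D\setminus\{\bar y\}$ (connectedness of $D$), so $G(\bar y,\cdot)\ge c_\eta>0$ on $D_{-\eta}\setminus B_\delta(\bar y)$ --- this is exactly where the truncation to $D_{-\eta}$ is indispensable, since $G(\bar y,\cdot)\to 0$ at $\partial D$. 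Feeding these bounds and $\lambda\le g_i\le\mu$ into the identity: for (a), $\lambda c_\eta\,|E\cap D_{-\eta}|\le \int_E G(\bar y,\cdot)\underline g \le \|P(\bar y,\cdot)\|_{L^\infty(\partial D)}\|\psi_1-\psi_2\|_{L^1(\partial D)} + M\|g_1-g_2\|_{L^1(\Omega(\bar v))}$, using $\Omega(\underline w)\subseteq\Omega(\bar v)$, which gives \eqref{MeasStability}; for (b), $\mu M\,|E|\ge \int_E G(\bar y,\cdot)g_1 \ge H[\psi_1-\psi_2](\bar y) + c_\eta\!\int_{\Omega(u_2)\cap D_{-\eta}}|g_1-g_2|$, after which the crude bound $|g_1-g_2|\le\mu-\lambda$ on $E$ lets one enlarge $\Omega(u_2)$ to $\Omega(u_1)$ in the last integral at the cost of an extra multiple of $|E|$, yielding \eqref{MeasNonDegen}. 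All constants then depend only on $n,D,\bar y,\delta,\eta,\lambda,\mu$, provided the boundary term is controlled.

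\emph{Main obstacle.} The delicate point is the boundary term $H[\,|\psi_1-\psi_2|\,](\bar y)=\int_{\partial D}|\psi_1-\psi_2|\,d\omega^{\bar y}$, which must be shown comparable from both sides to $\|\psi_1-\psi_2\|_{L^1(\partial D)}$ (surface measure); equivalently, one needs two--sided control of the Poisson kernel $P(\bar y,\cdot)$ of the Lipschitz domain at the fixed interior pole $\bar y$. This is the step where the Lipschitz structure of $D$ enters in an essential way, to be handled through quantitative harmonic--measure/Poisson--kernel estimates and, if needed, sharpened by the non--degeneracy $\sup_{B_r(x)}u\ge c(n)\lambda r^2$ and the $W^{2,p}$/$C^{1,1}$ regularity of obstacle--problem solutions. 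By contrast, the comparison principle, the validity of the representation formula for the $W^{2,p}$ solutions, and the interior Green bounds on compact subsets of $D$ are routine.
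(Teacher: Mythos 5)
Your proposal is correct, and the final reduction to two\--sided bounds on $G(\bar y,\cdot)$ and on the Poisson kernel $K(\bar y,\cdot)$ is exactly how the paper closes the argument; but your organization is genuinely different. The paper never writes a single identity handling both perturbations at once: it proves two separate lemmas (one with $g_1=g_2$, one with $\psi_1=\psi_2$), each via an auxiliary function ($\Omega$ solving the homogeneous problem with data $|\psi_1-\psi_2|$, resp.\ $\Phi$ with $\Delta\Phi=\chisub{\Omega(\bar v)}|g_1-g_2|$), and then glues them through an intermediate solution $w=\OS{\max(g_1,g_2)}{\max(\psi_1,\psi_2)}$ (for (a)) or $w=\OS{g_2}{\psi_1}$ (for (b)) together with the triangle inequality for symmetric differences. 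You instead evaluate one Green's identity for $W=\bar v-\underline w$ at $\bar y$, which yields both source terms simultaneously; the computation $\Delta W=-|g_1-g_2|\chisub{\Omega(\underline w)}+\underline g\,\chisub{E}$ is correct, and $W(\bar y)=0$ holds as you say. The price of your route appears only in (b): your identity produces $\|g_1-g_2\|_{L^1(\Omega(u_2)\cap D_{-\eta})}$ rather than the stated $\Omega(u_1)\cap D_{-\eta}$, and your fix (absorbing $\int_{E}|g_1-g_2|\le(\mu-\lambda)|E|$ into the left side) works but is an extra step the paper's choice of $w$ avoids, since its right\--hand\--side lemma is naturally phrased over $\Omega(\bar v)=\Omega(u_1)$. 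Finally, the "main obstacle" you flag is not a gap: the two\--sided Poisson kernel bounds at a fixed interior pole are exactly the paper's Proposition~\ref{PoissonKernelBounds}, obtained from positivity and H\"older continuity of the kernel function in Lipschitz domains (\cite{K}) plus compactness of $D_{-\delta}$; no quadratic non\--degeneracy or $C^{1,1}$ regularity of the solutions is needed anywhere.
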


\begin{remark}
	{\bf (a)}
	Use of the term \textbf{non--degeneracy} here differs 
	from most literature related to the obstacle problem. 
	Typically, one refers to the non--degenerate quadratic growth enjoyed by 
	solutions to the obstacle problem in non--contact regions, 
	while here we refer to the non--degenerate changes to contact regions
	induced by data perturbations.
	
	\medskip
	{\bf (b)} 
	We will also make use of the function 
	\[
		\underline{v} := \OS{\max(g_1,g_2)}{\min(\psi_1,\psi_2)}
	\]
	in the proof of Theorem~\ref{MainResult}. By maximum principle, 
	one immediately concludes $\underline{v} \le u_i \le \overline{v}$, $i = 1,2$.
\end{remark}

Comparing these results with the literature, a form of measure stability is
proved in \cite{C3}, with square root dependence on changes to the data,
while many more stability results appear in \cite{R}, 
including stability with respect to perturbations to
the operator itself, which we do not treat here.
On the other hand, all of the quantitative bounds established in \cite{R} 
also involve the square root of data perturbations (along with many convergence
results without giving a rate).
The closest result to our current linear stability (Theorem~\ref{MainResult}(a))
can be found in \cite[Theorem 4.1]{B}, where the first author 
worked in the specific setting of $D = B_1$, the unit ball in $\R^n.$
We note that the result in \cite{B}
measures the full set $\Lambda(u_1) \ \Delta \ \Lambda(u_2)$, while the current
work measures only the portion of this symmetric difference that is away from
the boundary $\partial D$ by some distance $\eta > 0$, however we are
working in a more general setting here and consider both perturbations to the 
right hand side and boundary data for obstacle problems.

Regarding linear non--degeneracy, our result (Theorem~\ref{MainResult}(b))
appears to be new in the literature.
One can find a form of linear non--degeneracy bounds in \cite[Theorem 5.7]{B},
where it is established that the Hausdorff distance between free boundaries is linearly
comparable to perturbations of the Laplacian data, in the special case when 
free boundaries are assumed to be regular. 
We note that the current work differs from \cite[Theorem 5.7]{B} as we 
do not assume any regularity on the free boundaries, we permit perturbations
to the Laplacian that are supported on proper subsets of the domain $B$ (whereas
the argument in \cite{B} requires the difference $g_2 - g_1$ to be uniformly 
bounded below by some positive constant), and we allow perturbations
to both the right hand side and boundary data. 

As a final note on literature related to perturbed obstacle problems, the reader 
should refer to \cite{SS} for precise formulas for normal velocity
and acceleration of free boundaries under sufficiently regular variations to
Laplacian and boundary data. The authors of \cite{SS} work 
in a global setting (i.e. $D = \R^n$) with compactly supported perturbations 
to Laplacian data and constant ``boundary'' data (at $|x| \to \infty$).
Finally, we note that regularity of free boundaries is assumed in \cite{SS},
as one requires to make sense of pointwise normal velocity.

Outlining the current work, in Section~\ref{sec:Setting} we introduce 
notation and state necessary lemmas from
elliptic theory and potential theory.
Then, in Section~\ref{sec:Proof}, we prove Theorem~\ref{MainResult} by 
splitting into cases where either boundary data or Laplacian data are fixed.

\section{Setting, Notation, and Preliminary Bounds}\label{sec:Setting}

We assume the set $D \subset \R^n$ is a bounded, connected Lipschitz domain. 
In this section, we collect preliminary lemmas 
we will use in the proof of Theorem~\ref{MainResult}.

\subsection{The Inhomogeneous Dirichlet Problem in $D$}

Considering the situation in \eqref{ObsProb} when boundary data is fixed 
(i.e. assuming $\psi_1 = \psi_2$), 
the difference $w = u_1 - u_2$ will satisfy an inhomogeneous Dirichlet problem of the form
\begin{equation}\label{InHomoEqn}
	\begin{cases}
		\Delta w = f & \text{in $D$}\\
		w = 0 & \text{on $\partial D$.}
	\end{cases}
\end{equation}
The precise expression of the function $f$ is not important at the moment (though it may be
instructive for the reader to identify values of $f$ on subsets of $D$ depending upon the 
contact sets $\Lambda(u_1)$, $\Lambda(u_2)$, and regions of overlap between these), 
rather we note that tools for controlling solutions to \eqref{InHomoEqn} with rough data
$f$ will thus help control differences between $u_1$ and $u_2$. We direct the reader to
\cite{JK} for a detailed treatment of inhomogeneous Dirichlet problems in Lipschitz domains,
though many of the statements below come from \cite{Sh}.

We first note that \eqref{InHomoEqn} is solvable for general domains $\Omega$ and data $f$:

\begin{thm}\cite[Theorem 1.2.1]{Sh}:
Let $\Omega$ be a bounded domain in $\R^n$. 
Given any $f \in W^{-1}(\Omega)$ (the dual space to $W^{1,2}_0(\Omega)$), there exists a unique
solution $u = Tf \in W^{1,2}_0(\Omega)$ to \eqref{InHomoEqn}, in the sense that
\[
	\int_\Omega \nabla u \, \nabla v = \int_\Omega f v \qquad \text{for all $v \in W^{1,2}_0(\Omega)$.}
\]	
\label{Solvable}
\end{thm}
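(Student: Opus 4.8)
The plan is to recognize \eqref{InHomoEqn} as the weak Euler--Lagrange problem for the Dirichlet energy and to invoke the Lax--Milgram theorem --- in fact just the Riesz representation theorem, since the bilinear form involved is symmetric --- on the Hilbert space $H := W^{1,2}_0(\Omega)$. First I would equip $H$ with the inner product $(u,v) := \int_\Omega \nabla u \cdot \nabla v$. The essential analytic input is the Poincar\'e inequality: because $\Omega$ is bounded there is a constant $c = c(\Omega) > 0$ with $\|u\|_{L^2(\Omega)} \le c\,\|\nabla u\|_{L^2(\Omega)}$ for all $u \in H$, so $(\cdot,\cdot)$ induces a norm equivalent to the full $W^{1,2}$--norm and $(H,(\cdot,\cdot))$ is a genuine Hilbert space; equivalently, the Dirichlet form $B(u,v) := \int_\Omega \nabla u \cdot \nabla v$ is bounded and coercive on $H$.

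Next, given $f \in W^{-1}(\Omega) = H^*$, the map $v \mapsto \langle f, v\rangle$ --- which is what the symbol $\int_\Omega f v$ abbreviates when $f$ is merely an element of the dual space --- is by definition a bounded linear functional on $H$. By the Riesz representation theorem there is a unique $u \in H$ with $(u,v) = \langle f, v\rangle$ for every $v \in H$; that is, $\int_\Omega \nabla u \cdot \nabla v = \int_\Omega f v$ for all $v \in W^{1,2}_0(\Omega)$, which is exactly the weak formulation of \eqref{InHomoEqn}. Setting $Tf := u$ defines the solution operator, and its linearity and boundedness are inherited directly from the Riesz isomorphism. Uniqueness is then immediate: if $u_1$ and $u_2$ both solve the weak problem, subtracting the two identities and testing with $v = u_1 - u_2$ yields $\|\nabla(u_1 - u_2)\|_{L^2(\Omega)}^2 = 0$, hence $u_1 = u_2$ by Poincar\'e once more.

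The only genuine subtlety --- and the point I would emphasize --- is the role of the boundedness of $\Omega$: this hypothesis is precisely what secures the Poincar\'e inequality and hence the coercivity of $B$; without it the Dirichlet form need not control the $L^2$--norm and neither existence nor uniqueness survives in general. I would also record explicitly that $\int_\Omega f v$ is to be interpreted as the $W^{-1}(\Omega)$--$W^{1,2}_0(\Omega)$ duality pairing whenever $f \notin L^2(\Omega)$, so that the statement is meaningful for the full class of admissible right--hand sides. Beyond these observations the argument is the standard Hilbert--space one and requires no further computation.
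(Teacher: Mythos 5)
Your argument is correct and is the canonical Riesz/Lax--Milgram proof via the Poincar\'e inequality; the paper itself states this result only as a citation to \cite[Theorem 1.2.1]{Sh} and supplies no proof, so your write--up is exactly the argument the cited source is standing in for. The only point worth flagging is a sign convention: integrating $\Delta u = f$ by parts against $v \in W^{1,2}_0(\Omega)$ gives $-\int_\Omega \nabla u \cdot \nabla v = \int_\Omega f v$, so the identity as printed in the statement (and faithfully reproduced in your proof) differs by a sign from the usual weak formulation of \eqref{InHomoEqn} --- your Hilbert--space argument is unaffected, since replacing $f$ by $-f$ changes nothing, but you should state which convention you are solving.
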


We note that there exists a {\it Dirichlet Green's function} for any bounded $\Omega$:

\begin{thm}\cite[Theorem 1.2.2]{Sh}: Let $\Omega$ be a bounded domain in $\R^n$ and 
let $T: W^{-1}(\Omega) \to W^{1,2}_0(\Omega)$ be the operator defined in Theorem~\ref{Solvable}.
There exists a kernel function $G(x,y)$ in $\Omega \times \Omega$ satisfying the following:
\begin{enumerate}
	\item[(a)] $G(x,y) \in C^{\infty}(\Omega \times \Omega \setminus \{(x,x) : x \in \Omega\} )$
	\item[(b)] $(1 - \eta_y(x)) G(x,y) \in W^{1,2}_0(\Omega)$ where $\eta_y(x) \in C^\infty_0(\Omega)$
		is any cut--off function satisfying $\eta \ge 0$ and $\eta = 1$ in $B_{\varepsilon}(y)$, 
		$\varepsilon > 0$.
	\item[(c)] $G(x,y) = G(y,x)$ for every $y \ne x$
	\item[(d)] $G(x, \cdot) \in L^1(\Omega)$ and 
	\[
		Tf(x) = \int_\Omega G(x,y) f(y) dy, \qquad \text{for all $f \in C^\infty_0(\Omega)$}
	\]
\end{enumerate}
\label{GreensFctnExists}
\end{thm}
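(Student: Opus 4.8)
The plan is to construct $G$ by excising the Newtonian singularity with a cut--off, correcting with a function produced by the operator $T$, and then verifying (a)--(d) one at a time. Let $\Phi$ denote the standard fundamental solution of $\Delta$ on $\R^n$, so that $\Delta\Phi=\delta_0$ in $\mathcal D'(\R^n)$, $\Phi\in C^\infty(\R^n\setminus\{0\})$, and $\Phi\in L^1_{\mathrm{loc}}(\R^n)$. For each $y\in\Omega$ fix $\eta_y\in C^\infty_0(\Omega)$ equal to $1$ on a ball about $y$ and supported in $B_{\rho(y)}(y)$, where $\rho$ is a smooth positive function on $\Omega$ with $\rho\le\tfrac12\,\mathrm{dist}(\cdot,\partial\Omega)$, arranged so that $(x,y)\mapsto\eta_y(x)$ is jointly smooth (rescale a fixed bump by $\rho$). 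Put
\[
	\zeta_y(x):=\Delta_x\bigl(\eta_y(x)\,\Phi(x-y)\bigr)-\delta_y(x);
\]
every term in which a derivative falls on $\eta_y$ is supported in an annulus about $y$ on which $\Phi(\cdot-y)$ is smooth, hence $\zeta_y\in C^\infty_0(\Omega)$, with $y\mapsto\zeta_y$ smooth and the supports confined to a fixed compact subset of $\Omega$ as long as $y$ ranges over a compact set. Now \emph{define}
\[
	G(x,y):=\eta_y(x)\,\Phi(x-y)-(T\zeta_y)(x),
\]
so that $\Delta_x G(\cdot,y)=\delta_y+\zeta_y-\zeta_y=\delta_y$ in $\mathcal D'(\Omega)$ for every $y$.

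For (a): interior elliptic regularity gives $T\colon C^\infty_0(\Omega)\to C^\infty(\Omega)$, and combining the energy bound $\|T\zeta_y\|_{W^{1,2}_0}\le\|\zeta_y\|_{W^{-1}(\Omega)}$ with interior $L^p$/Schauder estimates and difference quotients in the parameter $y$ shows that $(x,y)\mapsto(T\zeta_y)(x)$ is jointly $C^\infty$ on $\Omega\times\Omega$; since $(x,y)\mapsto\eta_y(x)\Phi(x-y)$ is jointly smooth off the diagonal, (a) follows. For (b), with the cut--off $\eta_y$ from the construction,
\[
	(1-\eta_y(x))\,G(x,y)=[(1-\eta_y)\eta_y](x)\,\Phi(x-y)-(1-\eta_y(x))\,(T\zeta_y)(x);
\]
the first summand lies in $C^\infty_0(\Omega)$ (its coefficient is smooth, compactly supported, and vanishes near $y$, where $\Phi(\cdot-y)$ is singular) and the second is a $C^\infty\cap W^{1,\infty}$ multiple of $T\zeta_y\in W^{1,2}_0(\Omega)$; for an arbitrary admissible cut--off $\tilde\eta$ one writes $1-\tilde\eta=(1-\tilde\eta)(1-\eta_y)+(1-\tilde\eta)\eta_y$ and argues on each piece as above.

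For (d): fix $f\in C^\infty_0(\Omega)$ and set $v(x):=\int_\Omega G(x,y)f(y)\,dy$, the integrand being supported in a fixed compact subset of $\Omega$ on which $G(x,\cdot)$ is integrable. Pulling $T$ through the integral (legitimate since $T$ is bounded and linear and $y\mapsto f(y)\zeta_y$ is continuous into $W^{-1}(\Omega)$) gives $v=P-T(Z)$ with $P(x):=\int_\Omega\eta_y(x)\Phi(x-y)f(y)\,dy$ and $Z:=\int_\Omega\zeta_y f(y)\,dy\in C^\infty_0(\Omega)$. A Fubini computation gives $\Delta P=f+Z$ in $\mathcal D'(\Omega)$, whence $\Delta v=f$; moreover $P$ is bounded with bounded gradient and vanishes near $\partial\Omega$ (the bound $\rho\le\tfrac12\,\mathrm{dist}(\cdot,\partial\Omega)$ keeps $\mathrm{supp}\,\eta_y$ away from $\partial\Omega$ for $y$ in the compact support of $f$), so $P\in W^{1,2}_0(\Omega)$, and $T(Z)\in W^{1,2}_0(\Omega)$; hence $v\in W^{1,2}_0(\Omega)$ solves $\Delta v=f$, and the uniqueness in Theorem~\ref{Solvable} forces $v=Tf$. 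That $G(x,\cdot)\in L^1(\Omega)$ then follows from $G(\cdot,x)=\eta_x(\cdot)\Phi(\cdot-x)-T\zeta_x$---the first term is compactly supported with a locally integrable singularity and $T\zeta_x\in W^{1,2}_0(\Omega)\subset L^1(\Omega)$---once (c) identifies $G(x,\cdot)$ with $G(\cdot,x)$. Finally (c): fix $y_1\ne y_2$, set $u_i:=G(\cdot,y_i)$, and apply Green's second identity on $\{\,\mathrm{dist}(\cdot,\partial\Omega)>\tau\,\}\setminus\bigl(B_\varepsilon(y_1)\cup B_\varepsilon(y_2)\bigr)$ for small $\tau,\varepsilon$: both $u_i$ are harmonic there, so the volume term vanishes; on the inner collar boundary $u_i=-T\zeta_{y_i}$, and that contribution equals $\int_\Omega\bigl(T\zeta_{y_1}\,\zeta_{y_2}-T\zeta_{y_2}\,\zeta_{y_1}\bigr)$ (Green's identity on $\{\mathrm{dist}>\tau\}$, on which $\Delta T\zeta_{y_i}=\zeta_{y_i}$ with $\mathrm{supp}\,\zeta_{y_i}$ inside for $\tau$ small), which is $0$ by the symmetry of $T$ (test $\Delta T\zeta_{y_i}=\zeta_{y_i}$ against $T\zeta_{y_j}$ for both orderings); and the two small--sphere terms converge, as $\varepsilon\to0$, to $\mp u_2(y_1)$ and $\pm u_1(y_2)$ by the asymptotics of $\Phi$. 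Thus $G(y_1,y_2)=G(y_2,y_1)$ for all $y_1\ne y_2$.

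The steps requiring real care are the joint smoothness in (a)---which rests on interior elliptic estimates and on smooth dependence of $T\zeta_y$ on $y$, obtained through difference quotients---and the symmetry (c), where one must keep careful track of both diagonal singularities and check that the $\partial\Omega$ contribution to Green's identity drops out; I expect (c) to be the principal obstacle, though it is entirely classical once the decomposition $G(x,y)=\eta_y(x)\Phi(x-y)-(T\zeta_y)(x)$ is in place. Everything else is either the definition of $G$ or an immediate consequence of interior regularity and the uniqueness supplied by Theorem~\ref{Solvable}.
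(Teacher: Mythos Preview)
The paper does not supply its own proof of this theorem: it is quoted verbatim from \cite[Theorem~1.2.2]{Sh} and used as a black box, so there is no in-paper argument to compare against.

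That said, your construction is the classical parametrix approach and is essentially correct. The decomposition $G(x,y)=\eta_y(x)\Phi(x-y)-(T\zeta_y)(x)$ with $\zeta_y=\Delta_x\bigl(\eta_y\Phi(\cdot-y)\bigr)-\delta_y\in C^\infty_0(\Omega)$ is the standard device, and your verifications of (b) and (d) are clean; in particular, the uniqueness in Theorem~\ref{Solvable} does all the work for the representation formula once you check $v\in W^{1,2}_0(\Omega)$ and $\Delta v=f$. Your treatment of (c) is also right in outline: since $u_i$ agrees with $-T\zeta_{y_i}$ in a full neighborhood of $\{\mathrm{dist}=\tau\}$ (not merely on that hypersurface), both $u_i$ and $\partial_\nu u_i$ match $-T\zeta_{y_i}$ and $-\partial_\nu T\zeta_{y_i}$ there, and the self-adjointness of $T$ (equivalently, symmetry of the Dirichlet form) kills the collar term; the small-sphere limits then yield $G(y_1,y_2)=G(y_2,y_1)$ by the standard fundamental-solution asymptotics. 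The one place I would tighten is (a): joint smoothness of $(x,y)\mapsto (T\zeta_y)(x)$ genuinely requires uniform interior elliptic estimates together with smooth dependence of $\zeta_y$ on $y$ in every $C^k$ norm, and you should make explicit that the difference-quotient argument in $y$ commutes with $T$ because $\zeta_{y+h}-\zeta_y$ is again compactly supported with norms controlled by $|h|$. None of this is a gap, just a place where a grader would want one more sentence.
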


Considering the low regularity expected for $f$ in \eqref{InHomoEqn} in the context
of $w = u_1 - u_2$, we extend the representation found in Theorem~\ref{GreensFctnExists}(d)
to more general functions $f$:



\begin{lemma} Let $D$ be a bounded, connected, Lipschitz domain in $\R^n$, 
let $G$ be the Dirichlet Green's function on $D$, and consider
\[
           f \in L^q(D) \qquad \text{with $q > n.$}
\]
Then the solution $u = Tf$ to \eqref{InHomoEqn} satisfies the representation
\[
	u(x) = \int_D G(x,y) f(y) dy \qquad \text{for all $x \in D$.}
\]
\label{GreensRep}
\end{lemma}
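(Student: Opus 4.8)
The plan is to prove the representation formula by density: it already holds for $f \in C^\infty_0(D)$ by Theorem~\ref{GreensFctnExists}(d), so I would approximate a general $f \in L^q(D)$, $q > n$, by smooth compactly supported functions and pass to the limit on both sides of $u(x) = \int_D G(x,y)f(y)\,dy$. Concretely, pick $f_k \in C^\infty_0(D)$ with $f_k \to f$ in $L^q(D)$ (using that $q < \infty$), and let $u_k = Tf_k$, so that $u_k(x) = \int_D G(x,y) f_k(y)\,dy$ for every $x$.

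For the left-hand side, I would use continuity of $T$. Since $L^q(D) \hookrightarrow W^{-1}(D)$ continuously (as $q > n \ge 2n/(n+2)$, so $L^q \subset L^{2n/(n+2)} \subset W^{-1}$ for a bounded domain), Theorem~\ref{Solvable} gives $u_k \to u$ in $W^{1,2}_0(D)$. To upgrade this to pointwise convergence, I would invoke the De Giorgi--Nash--Moser / Stampacchia $L^\infty$ estimate for \eqref{InHomoEqn} with $L^q$ data, $q > n/2$ (a fortiori $q > n$): $\|u_k - u_m\|_{L^\infty(D)} \le C\|f_k - f_m\|_{L^q(D)}$, so $(u_k)$ is Cauchy in $C(\overline{D})$ and $u_k \to u$ uniformly; in particular $u_k(x) \to u(x)$ for each fixed $x \in D$. (This is exactly where $q > n$ is comfortably more than enough; one needs $q > n/2$.)

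For the right-hand side, I need $\int_D G(x,y) f_k(y)\,dy \to \int_D G(x,y) f(y)\,dy$ for each fixed $x \in D$, which by Hölder follows once I know $G(x,\cdot) \in L^{q'}(D)$ with $q' = q/(q-1) < n/(n-1)$. This is the crux of the argument and the step I expect to require the most care: Theorem~\ref{GreensFctnExists} only asserts $G(x,\cdot) \in L^1(D)$, so I must improve this integrability. The standard route is the pointwise bound $0 \le G(x,y) \le c_n |x-y|^{2-n}$ for $n \ge 3$ (and the logarithmic analogue for $n = 2$), which holds because the Dirichlet Green's function is dominated by the whole-space fundamental solution via the maximum principle; since $|x - \cdot|^{2-n} \in L^p_{loc}$ for every $p < n/(n-2)$, and $n/(n-1) < n/(n-2)$, we get $G(x,\cdot) \in L^{q'}(D)$ for all $q' < n/(n-1)$, with a bound locally uniform in $x$. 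Alternatively, one can avoid an explicit pointwise bound by citing the mapping properties of $T$ on $L^p$ scales in Lipschitz domains from \cite{Sh} or \cite{JK} together with duality: $f \mapsto \int G(x,\cdot) f$ is the evaluation-at-$x$ functional composed with $T$, and $T: L^q \to C(\overline D)$ bounded for $q > n/2$ dualizes to $G(x,\cdot) \in L^{q'}$. With $G(x,\cdot) \in L^{q'}(D)$ in hand, Hölder gives
\[
	\left| \int_D G(x,y)\bigl(f_k(y) - f(y)\bigr)\,dy \right| \le \|G(x,\cdot)\|_{L^{q'}(D)} \, \|f_k - f\|_{L^q(D)} \longrightarrow 0,
\]
so the right-hand sides converge. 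Combining the two convergences and uniqueness of the limit yields $u(x) = \int_D G(x,y) f(y)\,dy$ for all $x \in D$, completing the proof.

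One technical point to address along the way: the identity $u_k(x) = \int_D G(x,y) f_k(y)\,dy$ from Theorem~\ref{GreensFctnExists}(d) is stated for $f_k \in C^\infty_0(\Omega)$, and $T$ acting on such $f_k$ produces $u_k \in W^{1,2}_0(D)$; I would note that $u_k$ has a continuous representative (again by the $L^\infty$ estimate, since $C^\infty_0 \subset L^q$) so that ``$u_k(x)$'' is unambiguous, and likewise fix the continuous representative of $u = Tf$ so the final formula is a genuine pointwise statement. No further regularity of $\partial D$ beyond Lipschitz is needed, since only the maximum principle and interior/global $L^\infty$ bounds for \eqref{InHomoEqn} — both available in bounded Lipschitz domains — enter.
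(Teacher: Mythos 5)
Your proof is correct, and its overall architecture is the same as the paper's: both arguments start from the identity on the dense subspace $C^\infty_0(D)$ supplied by Theorem~\ref{GreensFctnExists}(d) and extend it by showing that both sides are continuous functionals of $f\in L^q(D)$. The difference is in the ingredients used for the two continuity claims. For the evaluation functional $f\mapsto (Tf)(x)$, the paper invokes Calder\'on--Zygmund theory to get $T\colon L^q(D)\to W^{2,q}_0(D)$ bounded and then the embedding $W^{2,q}\subset C^{1,\alpha}$ for $q>n$; you instead use the Stampacchia/De Giorgi--Nash--Moser bound $\|Tf\|_{L^\infty(D)}\le C\|f\|_{L^q(D)}$ for $q>n/2$. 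Your route is arguably the more robust of the two, since global $W^{2,q}$ estimates up to the boundary are delicate in merely Lipschitz domains, whereas the $L^\infty$ bound holds in any bounded domain and needs only $q>n/2$. For the functional $f\mapsto \int_D G(x,y)f(y)\,dy$, the paper quotes $G(x,\cdot)\in W^{1,p}_0(D)$ for $p<n/(n-1)$ from \cite{K} and applies Sobolev embedding plus H\"older, while you derive the needed $L^{q'}$ integrability of $G(x,\cdot)$ from the pointwise comparison $|G(x,y)|\le c_n|x-y|^{2-n}$ with the free-space fundamental solution (note that with the paper's sign convention $G\le 0$, so the bound should be stated for $|G|$ or $-G$) --- an equivalent but more elementary input; your alternative duality argument is also fine. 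In either version the two continuous functionals agree on a dense subspace and hence everywhere, so the proof closes exactly as you describe.
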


\begin{proof}
Fix $x \in D$.
Since the Green's function $G(x,\cdot)$ belongs to $W^{1,p}_{0}(D)$ for all $p \in [1, n/(n-1))$
(see \cite[Theorem 1.2.8]{K}), the map given by:
\[
         If := \int_D G(x,y) f(y) dy
\]
is a bounded continuous linear functional on $L^q(D)$ for all $q > n/2$ by H\"older's inequality.
By Calderon-Zygmund theory
(see \cite[Chapter 9]{GT}), it follows that the solution map 
$T: f \mapsto u,$ taking $f \in L^q(D)$ with
$n/2 < q < \infty$ to the solution  $u := Tf \in W^{2,q}_0(D)$ of
\begin{equation}\label{InHomoEqnAgain}
	\begin{cases}
		\Delta u = f & \text{in $D$}\\
		u = 0 & \text{on $\partial D$}
	\end{cases}
\end{equation}
is a bounded linear map.  

Further, since $W^{2,q}_0(D) \subset C^{1,\alpha}_0(\overline{D})$
when $q > n,$ it follows that the map $\tilde{T}: f \mapsto u(x)$ 
(composition of $T$ and pointwise evaluation at $x \in D$) is also continuous.  
Thus, we know by Theorem~\ref{GreensFctnExists}(d) that the maps $\tilde{T}$ and $I$ 
agree whenever $f \in C^\infty_0(D).$ Since $C^\infty_0(D)$ is dense in
$L^q(D)$ for all $n < q < \infty,$ we know that $I(f)$ and $\tilde{T}(f)$ 
must agree for all $f \in L^q(D),$ when $q > n.$
\end{proof}


For any parameter $\eta > 0$, we note that the restricted domain
\[
	D_{-\eta} := D \setminus \mathcal{N}_{\eta}(\partial D) 
		= \{ x \in D : {\rm dist}(x, \partial D) \ge \eta \}
\]
is a compact subset of $D$. Thus, the following uniform bounds on the Green's
function follow from regularity and positivity of $G$ (away from the pole and away 
from the boundary $\partial D$).

\begin{proposition}
\label{GreenFctnBounds}
Fix $\delta > 0$ so that $D_{-\delta} \ne \emptyset$ and consider Green's function
$G(\bar y, \cdot)$ with singularity at $\bar y \in D_{-\delta}$, then:
\begin{itemize}
	\item[(i)] for $\eta > 0$, there is a constant 
		$\underline G = \underline G (n, D, \delta, \eta) > 0$ so that
		\[
			-G(x, \bar y) \ge \underline G \qquad 
				\text{for $x \in D_{- \eta}$.}
		\]
	\item[(ii)] there is a constant
		$\overline G = \overline G (n, D, \delta) > 0$ so that 
		\[
			-G(x, \bar y) \le \overline G \qquad 
				\text{for $x \in D \setminus B_{\delta}(\bar y)$.}
		\]
\end{itemize} 
\end{proposition}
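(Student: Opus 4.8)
The plan is to deduce both bounds from three classical facts about the Dirichlet Green's function on a bounded domain: its smoothness away from the diagonal (Theorem~\ref{GreensFctnExists}(a)), its strict negativity in the interior (here our sign convention is $\Delta G(\cdot,\bar y) = \delta_{\bar y}$, so $G < 0$ in $D$, which follows from the maximum principle applied to $G(\cdot,\bar y)$ together with the boundary condition $G = 0$ on $\partial D$), and the vanishing of $G$ at $\partial D$ in a quantitative sense. I would fix $\delta > 0$ with $D_{-\delta} \neq \emptyset$ and, throughout, take $\bar y \in D_{-\delta}$; note that this confines the pole to a compact subset of $D$, which is what makes the constants depend only on $n$, $D$, $\delta$ (and $\eta$ in part (i)).

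For part (ii), observe that on the set $K := \overline{D \setminus B_\delta(\bar y)}$ — more precisely on $\{(x,\bar y) : x \in \overline D,\ |x - \bar y| \ge \delta,\ \bar y \in D_{-\delta}\}$ — we are uniformly bounded away from the diagonal, so $G$ is continuous (indeed smooth in the interior) and extends continuously to the closure with value $0$ on the boundary. The function $(x,\bar y) \mapsto -G(x,\bar y)$ is then continuous on the compact set $\{(x,y) \in \overline D \times \overline{D_{-\delta}} : |x-y| \ge \delta\}$, hence attains a finite maximum $\overline G = \overline G(n,D,\delta)$; since $-G \le \overline G$ there and $-G \ge 0$ everywhere, (ii) follows. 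The only subtlety is justifying that $G(\cdot,\bar y)$ extends continuously up to $\partial D$ with boundary value $0$; this is exactly the content of $G(\cdot,\bar y)$ agreeing with the Poisson/potential-theoretic Green's function on the Lipschitz domain $D$, for which continuity up to the boundary away from the pole is standard (see the references to \cite{JK,K} already invoked), or alternatively one can use property (b) of Theorem~\ref{GreensFctnExists} together with elliptic boundary regularity on the annular region $D \setminus \overline{B_{\delta/2}(\bar y)}$.

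For part (i), fix $\eta > 0$ (we may assume $\eta \le \delta$, else shrink it, since making $D_{-\eta}$ larger only weakens the claim — actually making $\eta$ smaller enlarges $D_{-\eta}$, so I would just prove it for all $\eta > 0$ and keep the stated dependence). On $D_{-\eta}$ we are bounded away from $\partial D$, but \emph{not} necessarily away from the pole $\bar y$, so $-G(x,\bar y)$ may blow up — that is fine for a lower bound. Split $D_{-\eta}$ into $D_{-\eta} \cap B_{\delta/2}(\bar y)$, where $-G(x,\bar y) \to +\infty$ as $x \to \bar y$ and in any case $-G \ge -G$ evaluated near the pole is large (one can lower bound it by comparison with the fundamental solution, or simply note continuity and positivity give a positive min on the compact shell $\{\delta/4 \le |x-\bar y|\le \delta/2\}$ and then use that $-G$ is superharmonic, hence satisfies the minimum principle, on $B_{\delta/2}(\bar y) \setminus \{\bar y\}$ with the min attained on the outer sphere); and its complement $D_{-\eta} \setminus B_{\delta/2}(\bar y)$, which is a compact subset of $D$ on which $-G(\cdot,\bar y)$ is continuous and strictly positive, hence bounded below by a positive constant. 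To get a single constant uniform in $\bar y \in D_{-\delta}$, I would run this on the compact parameter set: $(x,y) \mapsto -G(x,y)$ restricted to $\{(x,y) \in \overline{D_{-\eta}} \times \overline{D_{-\delta}} : |x-y| \ge \delta/2\}$ is continuous, positive, on a compact set, so has a positive minimum; combined with the near-pole estimate this yields $\underline G = \underline G(n,D,\delta,\eta) > 0$ with $-G(x,\bar y) \ge \underline G$ on $D_{-\eta}$.

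The main obstacle is not any single estimate but the bookkeeping that makes the constants \emph{uniform in the pole location} $\bar y$ ranging over $D_{-\delta}$: the clean way to handle this is, as indicated above, to work with $-G$ as a function of the pair $(x,y)$ on an explicit compact subset of $D \times D$ (or $\overline D \times \overline{D_{-\delta}}$) that avoids the diagonal, where joint continuity from Theorem~\ref{GreensFctnExists}(a) and positivity immediately give the extrema. The secondary technical point is the behavior of $G(\cdot,\bar y)$ near $\partial D$ in the Lipschitz setting for part (ii); if one wishes to avoid quoting boundary continuity of the Green's function directly, Theorem~\ref{GreensFctnExists}(b) plus interior-and-up-to-boundary $W^{1,2}$ (or De Giorgi--Nash--Moser) estimates for $(1-\eta_{\bar y})G(\cdot,\bar y)$, which solves $\Delta[(1-\eta_{\bar y})G] = h$ with $h \in L^\infty$ supported near the pole and zero boundary data, give the required bound on the annular region $D \setminus B_\delta(\bar y)$.
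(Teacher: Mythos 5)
Your proposal is correct, and it is essentially a fully written-out version of the argument the paper only gestures at: the paper offers no proof of Proposition~\ref{GreenFctnBounds} beyond the preceding sentence asserting that the bounds ``follow from regularity and positivity of $G$'' together with compactness of $D_{-\eta}$ and $D_{-\delta}$, which is precisely your compactness-on-the-pair-$(x,y)$ argument. Two small remarks. First, in part (i) your near-pole step via the minimum principle needs the sphere $\partial B_{\delta/2}(\bar y)$ to lie in the compact set on which you have already established the positive minimum; that sphere sits in $D_{-\delta/2}$ but not necessarily in $D_{-\eta}$ when $\eta > \delta/2$, so you should take the minimum over $\overline{D_{-\min(\eta,\delta/2)}} \times \overline{D_{-\delta}}$ off the $\delta/2$-neighborhood of the diagonal (this only changes the constant, and strict positivity of $-G$ there uses connectedness of $D$, which the paper assumes). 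Second, for part (ii) you can avoid the boundary-continuity issue in the Lipschitz setting altogether: writing $G(x,y) = \Gamma(x-y) + h_y(x)$ with $\Gamma$ the (negative) fundamental solution and $h_y$ the harmonic corrector, the maximum principle gives $h_y \ge 0$, hence $-G(x,\bar y) \le -\Gamma(x-\bar y) \le c_n \delta^{2-n}$ for $|x-\bar y| \ge \delta$ when $n \ge 3$ (with the obvious logarithmic modification, using ${\rm diam}(D)$, when $n=2$); this yields $\overline G(n,D,\delta)$ directly and uniformly in $\bar y$, with no appeal to boundary regularity of $G$.
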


\subsection{The Homogeneous Dirichlet Problem in $D$}

Turning to the situation in \eqref{ObsProb} when Laplacian data is fixed (i.e. assuming $g_1 = g_2$), 
the difference $w = u_1 - u_2$ can be written as the sum of a solution to
the an inhomogeneous Dirichlet \eqref{InHomoEqn} and a harmonic function $\Cartman$
satisfying a homogeneous Dirichlet problem of the form
\begin{equation}\label{HomoEqn}
	\begin{cases}
		\Delta \Cartman = 0 & \text{in $D$}\\
		\Cartman = \phi & \text{on $\partial D$.}
	\end{cases}
\end{equation}
To bound the function $\Cartman$ and access the boundary data $\phi = \psi_1 - \psi_2$, 
we utilize harmonic measures and properties of Poisson kernels in Lipschitz domains. 
The sensitive dependence of solutions to boundary value problems and the regularity
of the boundaries themselves has been an area of deep inquiry with contributions from many
mathematicians. Although many great references can be included in this context, we refer 
the reader to \cite{K} for a detailed development of the content necessary for our setting.

We first note that \eqref{HomoEqn} is solvable for Lipschitz $D$ and continuous $\phi$:

\begin{thm}\cite[Theorems 1.3.1, 1.3.2(3) and equation (1.3.6)]{Sh}: 
	Let $D$ be a bounded Lipschitz domain. Given any $\phi \in C(\partial D),$ there exists a 
	$\Cartman \in C(\overline D)$ satisfying \eqref{HomoEqn}. Moreover, for every 
	$y \in D$ there exists a function $K(y,\cdot) \in C^\alpha(\partial D)$, for some
	$0 < \alpha < 1$, so that $\Cartman$ satisfies the expression
	\[
		\Cartman(y) = \int_{\partial D} \phi(x) K(y,x) d\sigma(x).
	\]
\label{PoissonKernel}
\end{thm}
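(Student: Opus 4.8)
This is a classical fact and a complete proof appears in the cited reference \cite{Sh}; the plan I would follow to reconstruct it splits into three ingredients. For the existence of a solution $\Cartman \in C(\overline{D})$ to \eqref{HomoEqn}, I would use Perron's method: take the family of subharmonic functions on $D$ whose upper boundary limit is dominated by $\phi$ everywhere on $\partial D$, and let $\Cartman$ be its upper envelope; standard potential theory shows $\Cartman$ is harmonic in $D$. To get continuity up to $\partial D$ with $\Cartman = \phi$ there, one needs every point of $\partial D$ to be a regular boundary point, i.e.\ to admit a barrier, and this is where the Lipschitz hypothesis enters: a Lipschitz domain satisfies a uniform exterior cone condition, and at the vertex of a truncated cone one can write down an explicit barrier (for instance a suitable homogeneous harmonic function, or the solution of the Dirichlet problem on a spherical shell). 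Hence all boundary points are regular and $\Cartman \in C(\overline{D})$ with the correct boundary values; uniqueness, if wanted, is immediate from the maximum principle.

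Next I would introduce harmonic measure. For fixed $y \in D$ the map $\phi \mapsto \Cartman(y)$ is linear, order preserving, and sends the constant function $1$ to $1$; thus it is a positive linear functional of norm one on $C(\partial D)$, and the Riesz representation theorem yields a Borel probability measure $\omega^y$ on $\partial D$ with
\[
	\Cartman(y) = \int_{\partial D} \phi(x)\, d\omega^y(x).
\]
This already produces the representation formula with $d\omega^y$ in place of $K(y,x)\,d\sigma(x)$, and up to here the argument is soft and works for any bounded domain all of whose boundary points are regular.

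The hard part will be the final step: promoting $d\omega^y$ to $K(y,\cdot)\,d\sigma$ and controlling the regularity of the density. Absolute continuity of $\omega^y$ with respect to surface measure $\sigma$ --- in fact the $A_\infty$ relation between the two --- is precisely Dahlberg's theorem for Lipschitz domains, which gives that $K(y,\cdot) := d\omega^y/d\sigma$ exists and lies in $L^2(\partial D)$. To upgrade this to $K(y,\cdot) \in C^{\alpha}(\partial D)$ for fixed interior pole $y$, I would use that $\sigma$-a.e.\ the kernel equals a constant multiple of the inward normal derivative of the Dirichlet Green's function $G(y,\cdot)$ (the outward normal exists $\sigma$-a.e.\ by Rademacher's theorem), and then invoke boundary regularity for nonnegative solutions vanishing on a Lipschitz portion of the boundary --- the boundary Harnack principle together with the De Giorgi--Nash--Moser interior and boundary estimates --- to turn the almost-everywhere identity into genuine H\"older continuity, with $\alpha \in (0,1)$ depending only on $n$ and the Lipschitz character of $D$. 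This is the step that uses the geometry of $\partial D$ quantitatively and where one must lean on the deeper potential- and harmonic-analytic machinery collected in \cite{K,Sh}; the first two steps, by contrast, are either standard Perron theory or elementary functional analysis.
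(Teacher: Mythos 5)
The paper itself offers no proof of this statement --- it is quoted directly from the reference [Sh] --- so the only benchmark is the standard literature argument, and your first three steps reproduce it correctly: Perron's method with exterior--cone barriers gives solvability in $C(\overline D)$ with the prescribed boundary values, the Riesz representation theorem produces harmonic measure $\omega^y$, and Dahlberg's theorem gives $\omega^y \ll \sigma$ with density in $L^2(\partial D)$ (indeed in the reverse H\"older class $B_2$).

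The genuine gap is in your final step. The boundary Harnack principle (comparison theorem) yields H\"older continuity on $\partial D$ of the \emph{kernel function} $d\omega^{y}/d\omega^{y_0}$, i.e.\ the density with respect to a fixed base harmonic measure --- that is what \cite[Theorem 1.3.17]{K} actually concerns --- and not of the density $d\omega^y/d\sigma$ with respect to surface measure. For a merely Lipschitz domain the latter need not be continuous, bounded, or bounded below, so the upgrade you propose cannot work. Concretely, let $D \subset \R^2$ be an L--shaped domain: near the reentrant corner of interior angle $3\pi/2$ the Green's function with fixed interior pole vanishes like $r^{2/3}$, hence $d\omega^y/d\sigma \sim r^{-1/3}$ is unbounded there (though still in $L^2$, consistent with Dahlberg), while at a convex corner of angle $\theta < \pi$ the density vanishes like $r^{\pi/\theta - 1}$, killing any positive lower bound. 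No application of boundary Harnack or De Giorgi--Nash--Moser will turn a $B_2$ weight into a $C^\alpha(\partial D)$ function here; one needs either more boundary smoothness (e.g.\ $C^{1}$ or $C^{1,\alpha}$) or the representation must be read against $d\omega^{y_0}$ rather than $d\sigma$. You should also be aware that this is not merely a defect of your reconstruction: the same distinction is load--bearing for Proposition~\ref{PoissonKernelBounds}, whose two--sided bounds on $K(\bar y,\cdot)$ are exactly what the L--shaped example violates for the surface--measure density, so it is worth checking precisely which normalization the cited Theorems 1.3.1, 1.3.2(3) of [Sh] use.
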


The function $K(y,\cdot)$ is the Poisson kernel on $D$, which can be defined in general as the 
Radon--Nikodym derivative of harmonic measure $\omega^y$ with respect to surface measure
$\sigma$ on $\partial D$. Other expressions for $K(y,\cdot)$ can also be found in
\cite[Corollaries 1.3.18 and 1.3.19]{K}, for instance. Moreover, by \cite[Theorem 1.3.17]{K}
and the definition of kernel function, we conclude that $K(y, x) > 0$ whenever $y \notin \partial D$.
Thus, by compactness of $D_{-\delta}$ and continuity of $K(y, \cdot)$ on $\partial D$, we 
derive the following bounds on $K$:

\begin{proposition}
\label{PoissonKernelBounds}
Fix $\delta > 0$ so that $D_{-\delta} \ne \emptyset.$ Then there exist positive constants
$\overline{K} = \overline{K}(n,D,\delta)$ and $\underline{K} = \underline{K}(n,D,\delta)$ 
so that
\begin{equation}
	\underline{K} \le K(\bar y, \cdot) \le \overline{K}, \qquad
		\text{for all $\bar y \in D_{-\delta}$.}
\end{equation}
\end{proposition}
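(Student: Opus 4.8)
The plan is to reduce the desired uniform two-sided bound on $D_{-\delta}\times\partial D$ to finitely many single-point estimates by means of the interior Harnack inequality, after which the bounds follow from continuity and strict positivity of $K(\bar y,\cdot)$ at each chosen reference point.

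First I would recall that, for each fixed $x\in\partial D$, the function $\bar y\mapsto K(\bar y,x)$ is a positive harmonic function on $D$ — this is built into the notion of kernel function in \cite{K}, and its strict positivity on $D\setminus\partial D$ was already noted above via \cite[Theorem 1.3.17]{K}. Since $D_{-\delta}$ is a compact subset of the open set $D$, I would cover it by finitely many balls $B_{r_1}(z_1),\dots,B_{r_N}(z_N)$ with centers $z_j\in D_{-\delta}$ chosen so that $\overline{B_{2r_j}(z_j)}\subset D$; the number $N$ and the radii depend only on $D$ and $\delta$. Applying the classical interior Harnack inequality to the nonnegative harmonic functions $K(\cdot,x)$ on each of these balls produces a dimensional constant $c_n>1$ with
\[
	c_n^{-1}\,K(z_j,x)\;\le\; K(\bar y,x)\;\le\; c_n\,K(z_j,x),
		\qquad\text{for every }\bar y\in B_{r_j}(z_j)\text{ and }x\in\partial D.
\]

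Next, I would exploit that for each $j$ the function $K(z_j,\cdot)$ is continuous on the compact set $\partial D$ (Theorem~\ref{PoissonKernel}) and strictly positive there, so that $0<m_j:=\min_{\partial D}K(z_j,\cdot)$ and $M_j:=\max_{\partial D}K(z_j,\cdot)<\infty$. Setting $\underline K:=c_n^{-1}\min_{1\le j\le N}m_j>0$ and $\overline K:=c_n\max_{1\le j\le N}M_j<\infty$, and noting that any $\bar y\in D_{-\delta}$ belongs to at least one of the balls $B_{r_j}(z_j)$, the displayed inequalities immediately give $\underline K\le K(\bar y,\cdot)\le\overline K$ on $\partial D$ for all $\bar y\in D_{-\delta}$, with $\underline K$ and $\overline K$ depending only on $n$, $D$, and $\delta$.

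The only step that requires any care is choosing the cover so that the enlarged balls $\overline{B_{2r_j}(z_j)}$ — on which harmonicity and nonnegativity of $K(\cdot,x)$ are used — remain inside $D$; this is possible precisely because $D_{-\delta}$ is a compact subset of the open set $D$. Connectedness of $D$ plays no role, since only the single-ball Harnack inequality is invoked rather than a global Harnack chain. As an alternative route one could instead establish joint continuity of $(\bar y,x)\mapsto K(\bar y,x)$ on $D\times\partial D$ and then conclude by compactness of $D_{-\delta}\times\partial D$, but that argument itself rests on the same interior Harnack estimate, so the direct reduction above seems the most economical.
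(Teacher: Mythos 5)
Your argument is correct, and it is actually more complete than what the paper offers: the paper does not prove this proposition at all, but simply asserts that the bounds ``follow from compactness of $D_{-\delta}$ and continuity of $K(y,\cdot)$ on $\partial D$'' together with positivity. That one-line justification has a gap that your proof repairs. Continuity of $K(\bar y,\cdot)$ on the compact set $\partial D$ for each \emph{fixed} $\bar y$ gives positive constants $m(\bar y)\le K(\bar y,\cdot)\le M(\bar y)$, but says nothing about uniformity of these constants as $\bar y$ ranges over $D_{-\delta}$; one needs either joint continuity of $(\bar y,x)\mapsto K(\bar y,x)$ or, as you do, a Harnack-type comparison that transfers the bounds from finitely many reference points $z_j$ to all of $D_{-\delta}$. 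Your reduction via the interior Harnack inequality on a finite cover of $D_{-\delta}$ by balls $B_{r_j}(z_j)$ with $\overline{B_{2r_j}(z_j)}\subset D$ is the standard and economical way to supply that uniformity, and your bookkeeping of the dependence of $\underline K,\overline K$ on $n$, $D$, $\delta$ is right. The only step you should make explicit rather than take for granted is the harmonicity of $\bar y\mapsto K(\bar y,x)$ for each fixed $x\in\partial D$: since $K(\bar y,\cdot)$ is defined as a Radon--Nikodym derivative $d\omega^{\bar y}/d\sigma$, it is a priori only defined $\sigma$-a.e., and one should pass to the representative $K(\bar y,x)=K(z,\bar y,x)\,K(z,x)$, where $K(z,\cdot,x)$ is the kernel function at $x$ normalized at a fixed $z\in D$ (harmonic and positive in its middle argument by \cite[Theorem 1.3.17]{K}); with that representative your Harnack step applies verbatim, and in fact the factorization alone already yields the two-sided bound on each ball. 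This is a minor point well within the scope of the cited references, so I would count your proof as correct and as a genuine improvement on the paper's unproved assertion.
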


\section{Measure Theoretic Changes to Contact Sets}\label{sec:Proof}

We now proceed with the proof of our main result, Theorem~\ref{MainResult}.
As a general overview, we first isolate cases where either the Laplacian or the boundary
data are fixed.
We prove results in each of these cases first, then we conclude the proof
of our main result by applying standard ordering principles on solutions to the obstacle problem.

\begin{lemma}[Linear control with Perturbed Boundary Data]
Take $u_i$ and $\bar v$ as in Theorem~\ref{MainResult} and assume that $g = g_1 = g_2$.
\begin{itemize}
	\item[(a)] (Linear Stability) Suppose $\bar y \in D \cap \Lambda(\bar v)$ with 
		dist$(\bar y, \partial D) \ge \delta > 0,$ and choose $\eta > 0$. Then
		\[
			|(\Lambda(u_1) \; \Delta \; \Lambda(u_2)) \cap D_{-\eta}| \le 
				\left( \frac{\overline K(n, D, \delta)}{\lambda \, \underline{G}(n, D, \eta, \delta)} \right)
				\| \psi_1 - \psi_2 \|_{L^1(\partial D)}.
		\]
	\item[(b)] (Linear Non--Degeneracy) Suppose $\psi_1 \ge \psi_2$ on $\partial D$ and 
		$B_\delta(\bar y) \subset \Lambda(u_1)$ for some $\delta > 0$. Then
		\[
			|\Lambda(u_1) \; \Delta \; \Lambda(u_2)| \ge
				\left( \frac{\underline{K}(n, D, \delta)}{\mu \, \overline G(n, D, \delta)}\right)
				\| \psi_1 - \psi_2 \|_{L^1(\partial D)}.
		\]
\end{itemize}
\label{BoundaryLemma}
\end{lemma}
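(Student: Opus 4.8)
The plan is to reduce both inequalities to the single–point representation formulas collected in Section~\ref{sec:Setting}, evaluated at $\bar y$, using crucially that the relevant solutions vanish at $\bar y$. Throughout write $\phi := \psi_1 - \psi_2$ and recall that, since $g_1 = g_2 = g$, we have $\bar v = \OS{g}{\max(\psi_1,\psi_2)}$ and $\underline v = \OS{g}{\min(\psi_1,\psi_2)}$, with $\underline v \le u_i \le \bar v$ for $i = 1,2$ by the maximum principle.

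For part (a) I would \emph{not} compare $u_1$ and $u_2$ directly --- their difference solves an equation with a sign--indefinite right--hand side --- but instead work with the envelope $\bar v - \underline v \ge 0$. From $\underline v \le u_i \le \bar v$ and $u_i \ge 0$ one gets $\Lambda(\bar v) \subseteq \Lambda(u_i) \subseteq \Lambda(\underline v)$, hence
\[
	\Lambda(u_1)\,\Delta\,\Lambda(u_2) \ \subseteq\ \Lambda(\underline v)\setminus\Lambda(\bar v) \ =\ \Omega(\bar v)\setminus\Omega(\underline v),
\]
and $\Omega(\underline v)\subseteq\Omega(\bar v)$ follows from $\underline v \le \bar v$. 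The function $\bar v - \underline v$ then solves $\Delta(\bar v - \underline v) = g\,\chisub{\Omega(\bar v)\setminus\Omega(\underline v)} =: F \ge 0$ in $D$, with boundary values $\bar v - \underline v = |\psi_1-\psi_2| = |\phi|$ on $\partial D$. Splitting $\bar v - \underline v = H + Z$ with $H$ harmonic and $H = |\phi|$ on $\partial D$, and $\Delta Z = F$, $Z = 0$ on $\partial D$, the fact that $F \in L^\infty(D) \subset L^q(D)$ for every $q$ lets me apply Lemma~\ref{GreensRep} and Theorem~\ref{PoissonKernel} to get $Z(\bar y) = \int_D G(\bar y,y) F(y)\,dy$ and $H(\bar y) = \int_{\partial D} |\phi|\, K(\bar y,\cdot)\,d\sigma$. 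Since $\bar y \in \Lambda(\bar v)$ and $0 \le \underline v \le \bar v$, both envelopes vanish at $\bar y$, so $(\bar v - \underline v)(\bar y) = 0$, i.e. $H(\bar y) = -Z(\bar y)$. Now Proposition~\ref{PoissonKernelBounds} (applicable since ${\rm dist}(\bar y,\partial D) \ge \delta$) gives $H(\bar y) \le \overline K\,\|\phi\|_{L^1(\partial D)}$, while Proposition~\ref{GreenFctnBounds}(i) together with $g \ge \lambda$ and $-G(\bar y,\cdot) \ge 0$ gives
\[
	-Z(\bar y) = \int_D \bigl(-G(\bar y,y)\bigr) g(y)\, \chisub{\Omega(\bar v)\setminus\Omega(\underline v)}(y)\,dy \ \ge\ \lambda\,\underline G\,\bigl|\bigl(\Omega(\bar v)\setminus\Omega(\underline v)\bigr)\cap D_{-\eta}\bigr|;
\]
combining these with the containment above yields the stated estimate with constant $\overline K/(\lambda\,\underline G)$.

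For part (b), the hypothesis $\psi_1 \ge \psi_2$ gives, by the comparison principle for the obstacle problem, $u_1 \ge u_2 \ge 0$, whence $\Lambda(u_1) \subseteq \Lambda(u_2)$ and $\Lambda(u_1)\,\Delta\,\Lambda(u_2) = \Omega(u_1)\setminus\Omega(u_2)$. Here I work with $w := u_1 - u_2 \ge 0$ directly: it solves $\Delta w = g\,\chisub{\Omega(u_1)\setminus\Omega(u_2)} =: f \ge 0$ in $D$ with $w = \phi = \psi_1-\psi_2 \ge 0$ on $\partial D$, and I split $w = h + z$ exactly as before and evaluate at $\bar y$. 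Since $B_\delta(\bar y)\subseteq\Lambda(u_1)$ we have $u_1(\bar y) = 0$ and, as $u_2 \le u_1$, also $u_2(\bar y) = 0$, so $h(\bar y) = -z(\bar y)$; moreover $B_\delta(\bar y)\subseteq D$ forces $\bar y \in D_{-\delta}$. Proposition~\ref{PoissonKernelBounds} with $\phi \ge 0$ gives $h(\bar y) = \int_{\partial D}\phi\,K(\bar y,\cdot)\,d\sigma \ge \underline K\,\|\phi\|_{L^1(\partial D)}$. On the other side, $B_\delta(\bar y)\cap\Omega(u_1) = \emptyset$ gives $\Omega(u_1)\setminus\Omega(u_2) \subseteq D\setminus B_\delta(\bar y)$, so Proposition~\ref{GreenFctnBounds}(ii) and $g \le \mu$ give $-z(\bar y) = \int_D(-G(\bar y,y))g(y)\chisub{\Omega(u_1)\setminus\Omega(u_2)}(y)\,dy \le \mu\,\overline G\,|\Omega(u_1)\setminus\Omega(u_2)|$; rearranging $\underline K\|\phi\|_{L^1(\partial D)} \le h(\bar y) = -z(\bar y) \le \mu\,\overline G\,|\Lambda(u_1)\,\Delta\,\Lambda(u_2)|$ gives the claim with constant $\underline K/(\mu\,\overline G)$.

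The routine points to verify are the splitting (into a harmonic piece plus an inhomogeneous piece with zero boundary data) together with the pointwise Green's and Poisson representations --- which is exactly where one needs the right--hand sides $f$, $F$ to lie in $L^q(D)$ for some $q > n$, and this holds because they are bounded --- and the two comparison principles $\underline v \le u_i \le \bar v$ and $u_1 \ge u_2$, both standard for the obstacle problem. The one genuinely delicate modeling choice, and what I expect to be the main obstacle, is the use of the envelopes $\bar v,\underline v$ in part (a): it is precisely the sign definiteness of $F \ge 0$ and $f \ge 0$ --- inherited from the monotonicities $\Omega(\underline v)\subseteq\Omega(\bar v)$ and $\Omega(u_2)\subseteq\Omega(u_1)$ --- that makes the one--sided Green's--function estimate go through, and a direct comparison of $u_1$ and $u_2$ in part (a) would destroy this structure.
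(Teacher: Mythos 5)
Your argument is correct and is essentially the paper's own proof: the paper likewise reduces to the envelopes $\bar v$ and $\underline v$, splits $\bar v - \underline v$ (resp.\ $u_1 - u_2$) into a harmonic piece with boundary data $|\psi_1-\psi_2|$ plus a zero--boundary--data piece, and evaluates at $\bar y$ using the Poisson kernel bounds on one side and the Green's function representation from Lemma~\ref{GreensRep} on the other. The constants and containments you obtain match the paper's exactly, so no further comparison is needed.
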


\begin{proof}
{\bf (a)} To prove linear stability, we define $\underline v := \OS{g}{\min(\psi_1, \psi_2)}$ and
note that $\underline v \le u_i \le \bar v$ holds in $D$, $i = 1, 2$. Therefore, we have 
\[
	\Lambda(u_1) \; \Delta \; \Lambda(u_2) \subset 
		\Lambda(\underline v) \; \Delta \; \Lambda(\bar v) =: \mathcal{L},
\]
and it suffices to prove the desired bound for $\mathcal{L} \cap D_{-\eta}.$

Define the auxiliary function $\Cartman$ solving
\begin{equation}\label{CartmanEqn}
	\begin{cases}
		\Delta \Cartman = 0 &\text{in $D$}\\
		\Cartman = |\psi_1 - \psi_2| &\text{on $\partial D$},
	\end{cases}
\end{equation}
and define $h := \bar v - \underline v - \Cartman$. Note that $h$ verifies
$h(\bar y) = - \Cartman (\bar y)$ and
\begin{equation}\label{hEqn}
	\begin{cases}
		\Delta h = \chisub{\mathcal{L}} g &\text{in $D$}\\
		h = 0 &\text{on $\partial D$}.
	\end{cases}
\end{equation}

Since $\bar y \in D_{-\delta}$ and $\Cartman$ solves \eqref{CartmanEqn},
we apply Theorem~\ref{PoissonKernel} and Proposition~\ref{PoissonKernelBounds}
to conclude the existence of $\overline K > 0$ such that 
\begin{equation}
\label{FirstBounds}
	\overline{K} \, \| \psi_1 - \psi_2 \|_{L^1(\partial D)}
  		\ge \int_{\partial D} |\psi_1 - \psi_2| \, K(\bar y, \cdot) \, d\sigma = \Cartman(\bar{y}). 
\end{equation}

By Proposition~\ref{GreenFctnBounds}(a) there exists $\underline G > 0$ 
so that $-G(\bar y, x) \ge \underline G$ for all $x \in D_{-\eta}$. 
Further, by $g \in L^\infty(D)$ and $\mathcal{L}$ measurable, we conclude that 
$\chisub{\mathcal{L}} \in L^{q}(D)$ for any $q > n$, so combining 
\eqref{hEqn} and Lemma~\ref{GreensRep}, we compute
\begin{align*}
 	\Cartman(\bar{y}) &= - h(\bar{y}) \\
 					&= - \int_{\mathcal{L}} g(x) \, G(\bar y, x) \, dx \\
  					&\ge \lambda \int_{\mathcal{L}} - G(\bar y, x) \, dx\\
  					&\ge \lambda \int_{\mathcal{L} \; \cap \; D_{-\eta}} -G(\bar{y}, x) \, dx \\
    					&\ge \lambda \underline{G} \; |\mathcal{L} \cap  D_{-\eta}| \, .
\end{align*}
Together with \eqref{FirstBounds}, this completes the proof of (a).\\

\medskip

{\bf (b)} To prove linear non--degeneracy, we use the same tools constructed in
the proof of (a), noting that $\psi_1 \ge \psi_2$ implies that $\bar v = u_1$, $\underline v = u_2$,
and $\mathcal{L} = \Lambda(u_1) \; \Delta \; \Lambda(u_2)$ in this case.
Also note that $h = u_1 - u_2 - \Cartman$ satisfies \eqref{hEqn}. 

By assumption that $B_{\delta}(\bar y) \subset \Lambda(u_1)$, we have
$\Lambda(u_1) \; \Delta \; \Lambda(u_2) \subset D \setminus B_{\delta}(\bar y)$
and so it follows from Theorem~\ref{GreenFctnBounds}(b) that there exists $\overline G > 0$
so that $-G(\bar y, x) \le \overline G$ for all $x \in \Lambda(u_1) \; \Delta \; \Lambda(u_2)$.
Therefore, employing Theorem~\ref{PoissonKernel}, Proposition~\ref{PoissonKernelBounds},
and Lemma~\ref{GreensRep}, we compute
\begin{align*}
	\underline{K} \, \|\psi_1 - \psi_2 \|_{L^1(\partial D)} 
		&\le \int_{\partial D} |\psi_1 - \psi_2| \, K(\bar y, \cdot) \, d\sigma\\ 
		&= \Cartman(\bar y) = -h(\bar y) \\
		&= -\int_{\Lambda(u_1) \Delta \Lambda(u_2)} g(x) \, G(\bar y, x) \, dx \\
		&\le \mu \int_{\Lambda(u_1) \Delta \Lambda(u_2)} - G(\bar y, x) \, dx\\ 
		&\le \mu \,  \overline G \ | \Lambda(u_1) \; \Delta \; \Lambda(u_2)|,
\end{align*}
which completes the proof of (b).
\end{proof}

\begin{lemma}[Linear control with Perturbed Right Hand Side]
Take $u_i$ and $\bar v$ as in Theorem~\ref{MainResult} and assume that $\psi = \psi_1 = \psi_2$.
Further, assume $\delta, \eta > 0$ are fixed and $B_\delta(\bar y) \subset \Lambda(\bar v)$
for some $\bar y \in D$.
\begin{itemize}
	\item[(a)] (Linear Stability) We have
		\[
			|(\Lambda(u_1) \; \Delta \; \Lambda(u_2)) \cap D_{-\eta}| \le 
				\left( \frac{\overline G(n, D, \delta)}{\lambda \, \underline{G}(n, D, \eta, \delta)} \right)
				\| g_1 - g_2 \|_{L^1(\Omega(\bar v))}.
		\]
	\item[(b)] (Linear Non--Degeneracy) Suppose $g_1 \le g_2$ in $D$ and 
		$B_\delta(\bar y) \subset \Lambda(u_1)$ for some $\delta > 0$. Then
		\[
			|\Lambda(u_1) \; \Delta \; \Lambda(u_2)| \ge
				\left( \frac{\underline{G}(n, D, \eta, \delta)}{\mu \, \overline G(n, D, \delta)}\right)
				\| g_1 - g_2 \|_{L^1(\Omega(\bar v) \, \cap \, D_{-\eta})}.
		\]
\end{itemize}
\label{LaplacianLemma}
\end{lemma}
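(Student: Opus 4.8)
The plan is to mirror the structure of Lemma~\ref{BoundaryLemma} but with the roles of the Poisson kernel and harmonic function replaced by a second copy of the Green's function representation, since now the perturbation enters through the right hand side rather than the boundary. First I would set up the comparison functions: with $\psi = \psi_1 = \psi_2$ we have $\bar v = \OS{\min(g_1,g_2)}{\psi}$ and $\underline v := \OS{\max(g_1,g_2)}{\psi}$, and by the maximum principle $\underline v \le u_i \le \bar v$, so that $\Lambda(u_1)\,\Delta\,\Lambda(u_2) \subset \Lambda(\underline v)\,\Delta\,\Lambda(\bar v) =: \mathcal L$. It then suffices to bound $|\mathcal L \cap D_{-\eta}|$. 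The key new point is to analyze $w := \bar v - \underline v \ge 0$, which vanishes on $\partial D$ and satisfies, in the distributional sense, $\Delta w = \chisub{\{\bar v > 0\}}\min(g_1,g_2) - \chisub{\{\underline v > 0\}}\max(g_1,g_2)$ in $D$. Writing $G = \min(g_1,g_2)$, $g_1 - g_2 = -|g_1-g_2|$ where $g_1 \le g_2$ etc., one checks that on $\Omega(\bar v) = \{\bar v > 0\}$ this right-hand side is controlled above by $-|g_1 - g_2|$ plus the jump term $-\chisub{\mathcal L}\max(g_1,g_2)$, while outside $\Omega(\bar v)$ it vanishes; the upshot is the pointwise/measure inequality $-\Delta w \ge \chisub{\mathcal L}\,\lambda - \chisub{\Omega(\bar v)}|g_1-g_2|$ in $D$ (after bookkeeping the overlap of contact sets carefully).

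Next I would evaluate the Green's representation at the interior point $\bar y$, using Lemma~\ref{GreensRep} (legitimate since all right-hand sides are bounded, hence in $L^q(D)$ for every $q$). Since $B_\delta(\bar y) \subset \Lambda(\bar v) \subset \Lambda(\underline v)$ as well (because $\underline v \le \bar v$), we have $w \equiv 0$ on $B_\delta(\bar y)$, so in particular $w(\bar y) = 0$; but more usefully, $\mathcal L \subset D \setminus B_\delta(\bar y)$ and $\Omega(\bar v) \subset D \setminus B_\delta(\bar y)$, so Proposition~\ref{GreenFctnBounds}(ii) gives $-G(\bar y, x) \le \overline G$ on both relevant regions. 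Writing $0 = w(\bar y) = \int_D G(\bar y, x)\,\Delta w(x)\,dx = -\int_D (-G(\bar y,x))(-\Delta w(x))\,dx$ and inserting the inequality for $-\Delta w$ derived above, I get
\[
	0 \ge \int_D (-G(\bar y, x))\bigl(\lambda \chisub{\mathcal L}(x) - \chisub{\Omega(\bar v)}(x)|g_1 - g_2|(x)\bigr) dx,
\]
which rearranges to
\[
	\lambda \int_{\mathcal L}(-G(\bar y, x))\,dx \le \int_{\Omega(\bar v)}(-G(\bar y, x))|g_1 - g_2|(x)\,dx \le \overline G \, \|g_1 - g_2\|_{L^1(\Omega(\bar v))}.
\]
Finally, restricting the left integral to $\mathcal L \cap D_{-\eta}$ and applying Proposition~\ref{GreenFctnBounds}(i), $-G(\bar y, x) \ge \underline G$ there, yields $\lambda \underline G |\mathcal L \cap D_{-\eta}| \le \overline G \|g_1-g_2\|_{L^1(\Omega(\bar v))}$, which is exactly part~(a) after dividing through.

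For part~(b), the assumption $g_1 \le g_2$ forces $\bar v = \OS{g_1}{\psi} = u_1$ and $\underline v = \OS{g_2}{\psi} = u_2$, so $\mathcal L = \Lambda(u_1)\,\Delta\,\Lambda(u_2)$ and $\Omega(\bar v) = \Omega(u_1)$. Here I want a lower bound on $|\mathcal L|$, so I would use the same identity but bound $-G(\bar y, x)$ from above by $\overline G$ on $\mathcal L$ (using $B_\delta(\bar y) \subset \Lambda(u_1)$, hence $\mathcal L \subset D \setminus B_\delta(\bar y)$ and Proposition~\ref{GreenFctnBounds}(ii)) and from below by $\underline G$ on $\Omega(u_1) \cap D_{-\eta}$ (Proposition~\ref{GreenFctnBounds}(i)). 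This gives
\[
	\underline G \int_{\Omega(u_1) \cap D_{-\eta}} |g_1 - g_2|\,dx \le \int_{\Omega(u_1)}(-G(\bar y,x))|g_1-g_2|\,dx \le \mu \int_{\mathcal L}(-G(\bar y, x))\,dx \le \mu \overline G |\mathcal L|,
\]
which is part~(b). The main obstacle I anticipate is the first step: carefully justifying the distributional identity $-\Delta w \ge \lambda \chisub{\mathcal L} - \chisub{\Omega(\bar v)}|g_1 - g_2|$, i.e. correctly accounting for the various overlap configurations of the contact sets $\Lambda(u_1)$, $\Lambda(u_2)$, $\Lambda(\bar v)$, $\Lambda(\underline v)$ and where the sign of $g_1 - g_2$ is known versus merely bounded. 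Everything after that is a routine application of the Green's function bounds already established, exactly paralleling Lemma~\ref{BoundaryLemma}.
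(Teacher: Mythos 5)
Your overall architecture matches the paper's: reduce to $\mathcal L := \Lambda(\underline v)\,\Delta\,\Lambda(\bar v)$ via $\underline v \le u_i \le \bar v$, exploit $w(\bar y) = \bar v(\bar y)-\underline v(\bar y) = 0$ through the Green's representation at $\bar y$, and then invoke Proposition~\ref{GreenFctnBounds} on $\mathcal L\cap D_{-\eta}$ and on $\Omega(\bar v)\subset D\setminus B_\delta(\bar y)$. (The paper routes the same computation through auxiliary functions $\Phi$ and $h=\bar v-\underline v+\Phi$ rather than working with $w$ directly; that difference is cosmetic.) However, the step you yourself flag as the crux is stated with a sign error and is false as written. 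Since $\underline v\le\bar v$, we have $\Omega(\underline v)\subset\Omega(\bar v)$, and $\mathcal L=\Omega(\bar v)\setminus\Omega(\underline v)$ is exactly the set where $\underline v$ is in contact but $\bar v$ is not; there $\Delta\underline v=0$ while $\Delta\bar v=\min(g_1,g_2)$, so the exact identity is
\[
	\Delta w = \chisub{\mathcal L}\,\min(g_1,g_2) - \chisub{\Omega(\underline v)}\,|g_1-g_2|,
\]
i.e.\ the jump term enters $\Delta w$ with a \emph{plus} sign, and $-\Delta w=-\min(g_1,g_2)\le-\lambda<0$ on $\mathcal L$. Your claimed bound $-\Delta w\ge\lambda\chisub{\mathcal L}-\chisub{\Omega(\bar v)}|g_1-g_2|$ would require $|g_1-g_2|\ge\lambda+\min(g_1,g_2)\ge 2\lambda$ on $\mathcal L$, which fails wherever $g_1=g_2$ on $\mathcal L$ --- and this certainly occurs, since $\mathcal L$ is determined globally and need not lie inside $\{g_1\ne g_2\}$ (perturb $g$ only on a small ball, for instance).

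The repair is immediate and recovers exactly the conclusion you display: insert the exact identity into $0=w(\bar y)=\int_D G(\bar y,x)\,\Delta w(x)\,dx$ (legitimate by Lemma~\ref{GreensRep}, as $\Delta w\in L^\infty(D)$) to obtain
\[
	\int_{\mathcal L}\bigl(-G(\bar y,x)\bigr)\min(g_1,g_2)\,dx
	= \int_{\Omega(\underline v)}\bigl(-G(\bar y,x)\bigr)|g_1-g_2|\,dx
	\le \int_{\Omega(\bar v)}\bigl(-G(\bar y,x)\bigr)|g_1-g_2|\,dx,
\]
then bound the left side below by $\lambda\,\underline G\,|\mathcal L\cap D_{-\eta}|$ and the right side above by $\overline G\,\|g_1-g_2\|_{L^1(\Omega(\bar v))}$, giving part (a). For part (b), with $g_1\le g_2$ the same identity rearranges to $\int_{\mathcal L}(-G)\,g_2=\int_{\Omega(u_1)}(-G)(g_2-g_1)$, which is precisely the middle step your chain needs. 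With that substitution your argument is correct and coincides with the paper's proof.
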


\begin{proof}
{\bf (a)} For linear stability, we again define $\underline v := \OS{\max(g_1, g_2)}{\psi}$,
so that $\underline v \le u_i \le \bar v$ again holds, thus it suffices to prove the result for 
\[
	\mathcal{L} := \Lambda(\underline v) \; \Delta \; \Lambda(\bar v) \supset
		\Lambda(u_1) \; \Delta \; \Lambda(u_2).
\]

We define the auxiliary function $\Phi$ solving
\begin{equation*}\label{PhiEqn}
\begin{cases}
	\Delta \Phi = \chisub{\Omega(\bar v)} |g_1 - g_2| & \text{in $D$}\\
	\Phi = 0 & \text{on $\partial D$}
\end{cases}
\end{equation*}
and define $h := \overline v - \underline v + \Phi$. 
It follows that $h(\bar y) = \Phi(\bar y)$ and
\begin{equation}\label{hOtherEqn}
\begin{cases}
	\Delta h = \chisub{\mathcal{L}} \max(g_1, g_2) &\text{in $D$}\\
	h = 0 & \text{on $\partial D$.}
\end{cases}
\end{equation}
Note that we have $\chisub{\mathcal{L}} \max (g_1, g_2) \in L^{q}(D)$ for
any $q > n$, and the assumption on $\bar y$ ensures 
$\Omega(\bar v) \subset D \setminus B_{\delta}(\bar y)$.
Thus, we apply Proposition~\ref{GreenFctnBounds}, Lemma~\ref{GreensRep}, 
and $g \ge \lambda$ in $D$ to compute
\begin{align*}
  \overline{G} \, \| g_1 - g_2 \|_{L^1(\Omega(\bar v))}
  		&\ge - \int_{\Omega(\bar v)} |g_1(x) - g_2(x)| \, G(\bar y, x) \, dx \\
  		&= -\Phi(\bar{y}) = - h(\bar{y})\\ 
  		&= -\int_{\mathcal{L}} \max(g_1(x), g_2(x)) \, G(\bar y, x) \, dx \\
  		&\ge \lambda \int_{\mathcal{L}} -G(\bar{y}, x) \, dx \\
  		&\ge \lambda \int_{\mathcal{L} \; \cap \; D_{-\eta}} - G(\bar{y}, x) \, dx \\
    		&\ge \lambda \, \underline{G} \ |\mathcal{L} \cap  D_{-\eta}| \, ,
\end{align*}
which completes the proof of (a).\\

\medskip

{\bf (b)} For linear non--degeneracy, we again use the same tools constructed in
the proof of (a). With $g_1 \le g_2,$ we have $\bar v = u_1$, $\underline v = u_2$,
and $\mathcal{L} = \Lambda(u_1) \; \Delta \; \Lambda(u_2)$ in this case.
Note that $h = u_1 - u_2 + \Phi$ satisfies \eqref{hOtherEqn} where
$\max(g_1, g_2) = g_2$ in this case. 
Thus, applying Proposition~\ref{GreenFctnBounds}, Lemma~\ref{GreensRep} 
and $g \le \mu$ in $D$, we have
\begin{align*}
	\underline{G} \, \|g_1 - g_2 \|_{L^1(\Omega(\bar v) \cap D_{-\eta})} 
		&\le -\int_{\Omega(\bar v) \cap D_{-\eta}} (g_2(x) - g_1(x)) \, G(x, \bar y) \, dx \\
		&\le -\int_{\Omega(\bar v) \cap D_{-\eta}} (g_2(x) - g_1(x)) \, G(x, \bar y) \, dx \\
		&= -\Phi(\bar y) = -h(\bar y) \\
		&= -\int_{\Lambda(u_1) \Delta \Lambda(u_2)} g_2(x) \, G(x, \bar y) \, dx \\
		&\le \mu \int_{\Lambda(u_1) \Delta \Lambda(u_2)} -G(x, \bar y) \, dx \\
		&\le \mu \,  \overline G \ | \Lambda(u_1) \; \Delta \; \Lambda(u_2)|,
\end{align*}
which completes the proof of (b).
\end{proof}

\subsection{Proof of Theorem~\ref{MainResult}}
We conclude the note with a quick comment on bringing together the results from 
the preceding Lemmata to prove Theorem~\ref{MainResult}.

\begin{proof}[Proof of Theorem~\ref{MainResult}]
{\bf (a)} Regarding linear stability, we recall $\bar v$ as defined in the statement
of the theorem and further define 
\begin{align*}
	\underline v &= \OS{\max (g_1, g_2)}{\min (\psi_1, \psi_2)}, \qquad \text{and}\\
	w &= \OS{\max (g_1, g_2)}{\max (\psi_1, \psi_2)}.
\end{align*}
Notice that we can apply Lemma~\ref{BoundaryLemma}(a) to the set difference
$\Lambda(w) \; \Delta \; \Lambda(\underline v)$, while Lemma~\ref{LaplacianLemma}(a) applies to 
$\Lambda(\bar v) \; \Delta \; \Lambda(w)$. The proof of part (a) of Theorem~\ref{MainResult}
thus follows from these Lemmata and the simple bound
\begin{align*}
	|(\Lambda(u_1) \; \Delta \; \Lambda(u_2)) \cap D_{-\eta}|
		&\le |(\Lambda(\bar v) \; \Delta \; \Lambda(\underline v) \cap D_{-\eta}| \\
		&\le |(\Lambda(\bar v) \; \Delta \; \Lambda(w)) \cap D_{-\eta}| + 
		|(\Lambda(w) \; \Delta \; \Lambda(\underline v)) \cap D_{-\eta}|.
\end{align*}

{\bf (b)} Proving non--degeneracy follows in a similar manner, where here the 
function $w$ satisfies
\[
	w = \OS{g_2}{\psi_1},
\]
due to the monotonicity assumptions on $\psi_i$ and $g_i$.
The result now follows by applying Lemma~\ref{BoundaryLemma}(b) to 
$\Lambda(w) \; \Delta \; \Lambda(u_2)$, applying Lemma~\ref{LaplacianLemma}(b)
to $\Lambda(u_1) \; \Delta \; \Lambda(w)$, and noting that these sets 
form a disjoint decomposition of $\Lambda(u_1) \; \Delta \; \Lambda(u_2)$ in this case.
\end{proof}

\end{document}